\newtheorem{theorem}{Theorem}[section]
\newtheorem{proposition}[theorem]{Proposition}
\newtheorem{lemma}[theorem]{Lemma}
\theoremstyle{definition}
\newtheorem{definition}[theorem]{Definition}
\theoremstyle{remark}
\newtheorem{remark}[theorem]{Remark}
\numberwithin{equation}{section}
\begin{document}

\title[MUH and Schemes]
{Mutually Unbiased Bush-type Hadamard Matrices and Association Schemes}
\renewcommand{\thefootnote}{\fnsymbol{footnote}}
\author{Hadi Kharaghani}
\address{Department of Mathematics and Computer Science, University of Lethbridge,
Lethbridge, Alberta, T1K 3M4, Canada}
\email{kharaghani@uleth.ca}
\author{Sara Sasani}
\address{Department of Mathematics and Computer Science, University of Lethbridge,
Lethbridge, Alberta, T1K 3M4, Canada}
\email{sasani@uleth.ca}
\author{Sho Suda}
\address{Department of Mathematics Education, Aichi University of Education, Kariya, Aichi, 448-8542, Japan}
\email{suda@auecc.aichi-edu.ac.jp}

\begin{abstract}

It was shown by LeCompte, Martin, and Oweans in 2010 that the existence of mutually unbiased Hadamard matrices and the identity matrix, which coincide with mutually unbiased bases, is equivalent to that of a $Q$-polynomial association scheme of class four which is both $Q$-antipodal and $Q$-bipartite. 
We prove that the existence of a set of mutually unbiased Bush-type Hadamard matrices is equivalent to that of an association scheme of class five.
As an application of this equivalence, we obtain the upper bound of the number of mutually unbiased Bush-type Hadamard matrices of order $4n^2$ to be $2n-1$. This is in contrast to the fact that the upper bound of mutually unbiased Hadamard matrices of order $4n^2$ is $2n^2$. 
We also discuss a relation of our scheme to some fusion schemes which are $Q$-antipodal and $Q$-bipartite $Q$-polynomial of class $4$.

\end{abstract}

\maketitle
\footnotetext[1]{Hadi Kharaghani is supported by an NSERC Discovery Grant.}
\vskip5mm

\section{Introduction}
A {\it  Hadamard} matrix is a matrix $H$ of order $n$ with
entries in $(-1,1)$ and orthogonal rows in the usual
inner product on ${\mathbb{R}}^n$.
Two Hadamard matrices $H$ and $K$ of order $n$ are called {\it unbiased}
if $HK^{t}={\sqrt n} L$ for some Hadamard matrix $L$,
where $K^t$ denotes the transpose of $K$.
In this case, it follows that $n$ must be a perfect square.
A Hadamard matrix of order $n$ for which the row sums and column sums
are all the same, necessarily $\sqrt n$, is called {\it regular},
see \cite{ys}.
\begin{definition}
A \emph{Bush-type} Hadamard matrix
is a  block matrix $H=[H_{ij}]$ of
order $4n^2$ with block size $2n$,
$H_{ii}=J_{2n}$
and $H_{ij}J_{2n}
= J_{2n}H_{ij}=0$, $i\ne j$, $1\le i \le 2n$, $1\le j\le 2n$
where $J_{2n}$ is the $2n$ by $2n$ matrix of all 1 entries.
\end{definition}
It is known that for odd values of $n$ there is no pair of  unbiased Bush-type Hadamard matrices of order $4n^2$ \cite{bk}.
However, on the contrary
for even values of $n$, there are unbiased Bush-type Hadamard matrices of order $4n^2$ \cite{hko}. 
(In \cite[Theorem 13]{hko}, an assumption is needed. The modified version will be presented in Section~\ref{5classas}.) 
One very important property
of unbiased Bush-type Hadamard matrices,  as is shown in section 3, is the fact that they are closed under the property of being of Bush-type. 

It was shown by LeCompte, Martin, and Oweans in \cite{lmo} that the existence of mutually unbiased Hadamard matrices and the identity matrix, which coincide with mutually unbiased bases, is equivalent to that of a $Q$-polynomial association scheme of class four which is both $Q$-antipodal and $Q$-bipartite. 

Our aim in this
paper is to show that the existence of unbiased Bush-type Hadamard matrices is equivalent to the existence
of a certain association scheme of class five. 
As an application of this equivalence, we obtain the upper bound of the number of mutually unbiased Bush-type Hadamard matrices of order $4n^2$ is $2n-1$, whereas the upper bound of mutually unbiased Hadamard matrices of order $4n^2$ is $2n^2$ \cite[Theorem 2]{hko}. 
Also we discuss a relation of our scheme to such association schemes of class four. 

\section{Association schemes}
A \emph{symmetric $d$-class association scheme}, see \cite{bi}, 
with vertex set $X$ of size $n$ and $d$ classes
is a set of symmetric $(0,1)$-matrices $A_0, \ldots, A_d$, which are not equal to zero matrix, with
rows and columns indexed by $X$, such that:
\begin{enumerate}
\item $A_0=I_n$, the identity matrix of order $n$.
\item $\sum_{i=0}^d A_i = J_n$, the matrix of order $n$ with all one's entries.
\item For all $i$, $j$, $A_iA_j=\sum_{k=0}^d p_{ij}^k A_k$
for some $p_{ij}^k$'s.
\end{enumerate}
It follows from property (3) that the $A_i$'s necessarily commute.
The vector space spanned by $A_i$'s forms a commutative algebra, denoted by $\mathcal{A}$ and called the \emph{Bose-Mesner algebra} or \emph{adjacency algebra}.
There exists a basis of $\mathcal{A}$ consisting of primitive idempotents, say $E_0=(1/n)J_n,E_1,\ldots,E_d$. 
Since  $\{A_0,A_1,\ldots,A_d\}$ and $\{E_0,E_1,\ldots,E_d\}$ are two bases in $\mathcal{A}$, there exist the change-of-bases matrices $P=(P_{ij})_{i,j=0}^d$, $Q=(Q_{ij})_{i,j=0}^d$ so that
\begin{align*}
A_j=\sum_{i=0}^d P_{ij}E_j,\quad E_j=\frac{1}{n}\sum_{i=0}^d Q_{ij}A_j.
\end{align*}   
Since disjoint $(0,1)$-matrices $A_i$'s form a basis of $\mathcal{A}$, the algebra $\mathcal{A}$ is closed under the entrywise multiplication denoted by $\circ$. 
The \emph{Krein parameters} $q_{ij}^k$'s are defined by $E_i\circ E_j=\frac{1}{n}\sum_{k=0}^dq_{ij}^k E_k$. 
The \emph{Krein matrix} $B_i^*$ is defined as $B_i^*=(q_{ij}^k)_{j,k=0}^d$.

Each of the matrices $A_i$'s can be considered as the adjacency matrix of some graph without multiedges. The scheme is \emph{imprimitive} if,
on viewing the $A_i$'s as adjacency matrices
of graphs $G_i$ on vertex set $X$, at least one of the $G_i$'s, $i \ne 0$, is disconnected. 
Then there exists a set $\mathcal{I}$ of indices such that $0$ and such $i$ are elements of $\mathcal{I}$ and $\sum_{j\in\mathcal{I}}A_j=I_p\otimes J_q$ for some $p,q$ with $p<n$. 
Thus the set of $n$ vertices $X$ are partitioned into $p$ subsets called \emph{fibers}, each of which has size $q$. 
The set $\mathcal{I}$ defines an equivalence relation on $\{0,1,\ldots,d\}$ by $j\sim k$ if and only if $p_{ij}^k\neq 0$ for some $i\in \mathcal{I}$.  
Let $\mathcal{I}_0=\mathcal{I},\mathcal{I}_1,\ldots,\mathcal{I}_t$ be the equivalent classes on $\{0,1,\ldots,d\}$ by $\sim$. 
Then by \cite[Theorem~9.4]{bi} there exist $(0,1)$-matrices $\overline{A}_j$ ($0\leq j\leq t$) such that 
\begin{align*}
\sum_{i\in \mathcal{I}_j}A_i=\overline{A}_j\otimes J_q,
\end{align*}
and the matrices $\overline{A}_j$ ($0\leq j\leq t$) define an association scheme on the set of fibers. 
This is called the \emph{quotient association scheme} with respect to $\mathcal{I}$

For fibers $U$ and $V$, let $\mathcal{I}(U,V)$ denote the set of indices of adjacency matrices that has an edge between $U$ and $V$. 
We define a $(0,1)$-matrix $A_i^{UV}$ by 
\begin{align*}
(A_i^{UV})_{xy}=\begin{cases}
1 &\text{ if } (A_i)_{xy}=1, x\in U, y\in V,\\
0 &\text{ otherwise}.
\end{cases}
\end{align*}
\begin{definition}
An imprimitive association scheme is called uniform if its quotient association scheme is class 1 and there exist integers $a_{ij}^k$ such that for  all fibers $U,V,W$ and $i\in\mathcal{I}(U,V),j\in\mathcal{I}(V,W)$, we have 
\begin{align*}
A_i^{UV}A_i^{VW}=\sum_{k}a_{ij}^k A_k^{UW}.
\end{align*}
\end{definition}

\section{Class 5 Association Scheme}\label{5classas}
Let $\{ H_1, H_2, \ldots, H_m\}$ be a set
of Mutually Unbiased Regular Hadamard (MURH) matrices of order $4n^2$ with $m\ge 2$.
Let
\begin{align}M =
\left[
\begin{array}{c}
I\\H_1/2n\\H_2/2n\\\ldots\\H_m/2n
\end{array}
\right]
\left[
\begin{array}{ccccc}
I&H_1^t/2n&H_2^t/2n&\ldots&H_m^t/2n
\end{array}
\right].\label{eq:M}
\end{align}
be the {\em Gramian} of the set of matrices $\{I, \frac{1}{2n}H_1, \frac{1}{2n}H_2, \ldots, \frac{1}{2n}H_m\}$.
Let $B=2n(M-I)$.
Then $B$ is
a symmetric $(0,-1,1)$-matrix. 
Let  $$B=B_1-B_2,$$ where
$B_1$ and $B_2$ are  disjoint 
$(0,1)$-matrices.  By reworking a result of Mathon, see \cite{d},
 we have the following:
\begin{lemma}\label{3-assoc}
Let $I=I_{4n^2(m+1)}$, $B_1, B_2$ and $B_3=I_{m+1}\otimes J_{4n^2} - I_{4n^2(m+1)}$. Then, $I,B_1,B_2,B_3$
form a 3-class association scheme.
\end{lemma}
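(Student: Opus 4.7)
The plan is to verify the three defining axioms of a symmetric association scheme for the matrices $I, B_1, B_2, B_3$. Axiom (1) is immediate. Axiom (2) follows from the observation that $B$ is zero on each diagonal block of size $4n^2$ and has $\pm 1$ entries elsewhere, so $B_1 + B_2 = J - (I + B_3)$. The bulk of the work is axiom (3), that each product $B_iB_j$ lies in the linear span of $I, B_1, B_2, B_3$.

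The central algebraic input is the identity $M^2 = (m+1)M$. This is immediate from the factorization $M = uu^t$ for the tall block matrix $u$ in the definition of $M$: one computes $u^tu = I + \sum_{i=1}^m H_i^tH_i/(4n^2) = (m+1)I$ using $H_i^tH_i = 4n^2 I$. Substituting $M = I + B/(2n)$ yields the scalar relation $B^2 = 2n(m-1)B + 4n^2 m I$, which, read off blockwise, gives the crucial off-block identity
\[
\sum_{i' \neq i,j} K_{ii'} K_{i'j} = 2n(m-1)\, K_{ij}, \qquad i\neq j,
\]
where $K_{ij}$ denotes the off-diagonal block of $B$ at position $(i,j)$. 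Each such $K_{ij}$ is a regular Hadamard matrix of order $4n^2$ with row sum $2n$, since $L_{ij} = H_iH_j^t/(2n)$ is regular whenever $H_i$ and $H_j$ are.

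I would then compute each of $B_1^2, B_1B_2, B_2^2, B_1B_3, B_2B_3, B_3^2$ by blockwise expansion. Writing $K_{ij}^{\pm} = (J \pm K_{ij})/2$ for the $\pm 1$ indicator matrices, the off-diagonal blocks of $B_iB_j$ (for $i,j \in \{1,2\}$) reduce after expansion to a combination of $J$, $K_{ij}$, and $\sum_{i'\neq i,j} K_{ii'}K_{i'j}$; the last is handled by the key identity above, while $J$ and $K_{ij}$ each split into a combination of $B_1^{(i,j)}$ and $B_2^{(i,j)}$. The diagonal blocks reduce to the standard counts for two rows of a regular Hadamard matrix with row sum $2n$ (namely $n^2 + n$ agreements on $+1$, $n^2 - n$ agreements on $-1$, and $n^2$ disagreements of each signed type), giving $K^{\pm}(K^{\pm})^t = n^2 I + (n^2 \pm n)J$ and $K^{+}(K^{-})^t = n^2(J-I)$, summed over the $m$ intermediate fibers. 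The products $B_iB_3$ are immediate from $K_{ij}^{\pm} J = (2n^2 \pm n) J$, and $B_3^2 = (4n^2-1)I + (4n^2-2) B_3$ follows from $(I+B_3)^2 = 4n^2(I+B_3)$.

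Because the coefficients in these block expressions depend only on the block type, every product $B_iB_j$ is a scalar combination of $I, B_1, B_2, B_3$, establishing axiom (3); commutativity is automatic, since each product is forced to be symmetric once it is expressed as a linear combination of symmetric matrices. The main obstacle is the blockwise bookkeeping and ensuring that the resulting intersection numbers come out as nonnegative integers, which implicitly imposes parity conditions on $m$ and $n$ compatible with the hypothesized existence of the MURH set.
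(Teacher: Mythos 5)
Your proposal is correct and takes essentially the same route as the paper: the paper's proof simply records the intersection numbers obtained from exactly this blockwise computation, and your Gramian identity $M^2=(m+1)M$ (equivalently $B^2=4n^2mI+2n(m-1)B$), together with the regularity counts $K^{\pm}(K^{\pm})^t=n^2I+(n^2\pm n)J$, is the organizing fact the paper leaves implicit. Worth noting: your diagonal-block count gives the coefficient of $B_3$ in $B_1^2$ (resp.\ $B_2^2$) as $(n^2+n)m$ (resp.\ $(n^2-n)m$), and a row-sum check confirms these values, so the paper's displayed formulas appear to be missing a factor of $m$ on those two terms while your method produces the correct ones.
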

\begin{proof}
The intersection numbers are:
\begin{align}
B_1^2&=
(2n^2+ n)mI+(n^2+\frac{3}{2}n)(m-1)B_1\notag\break\displaybreak[0]\\
&+(n^2+n/2)(m-1)B_2+(n^2+n)B_3, \notag\break\displaybreak[0]\\
B_2^2&=
(2n^2- n)mI+(n^2-\frac{1}{2}n)(m-1)B_1\notag\\ &+(n^2-\frac{3}{2}n)(m-1)B_2+(n^2-n)B_3,\notag\break\displaybreak[0]\\
B_1B_2 &=(n^2-n/2)(m-1)B_1\notag\break\displaybreak[0]\\
&+(n^2+n/2)(m-1)B_2+n^2mB_3,\notag\break\displaybreak[0]\\
B_1B_3 &=(2n^2+n-1)B_1+(2n^2+n)B_2,\notag\break\displaybreak[0]\\
B_2B_3 &=
(2n^2-n)B_1+(2n^2-n-1)B_2.\notag \qedhere
\end{align}
\end{proof}

We now impose a further structure on the regular Hadamard matrices $H_i$s and assume that 
they are all of Bush-type. First we need the following.

\begin{lemma}\label{lem:bh}
Let $H$ and $K$ be two unbiased Bush-type Hadamard matrices of order $4n^2$. 
Let $L$ be a $(1,-1)$-matrix so that $HK^t=2nL$. 
Then $L$ is a Bush-type Hadamard matrix. 
\end{lemma}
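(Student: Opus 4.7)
The plan is to exploit the Bush-type structure of $H$ and $K$ via a family of eigenvectors, and then combine the resulting row/column sum information with the fact that the entries of $L$ are $\pm 1$.

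For $j=1,\ldots,2n$, let $v_j=e_j\otimes \mathbf{1}_{2n}\in\mathbb{R}^{4n^2}$, where $e_j$ is the $j$-th standard basis vector of $\mathbb{R}^{2n}$ and $\mathbf{1}_{2n}$ is the all-ones vector. First I would observe that the Bush-type conditions $H_{jj}=J_{2n}$ and $H_{jk}J_{2n}=J_{2n}H_{jk}=0$ (for $k\neq j$) translate exactly into the statements $H_{jk}\mathbf{1}_{2n}=2n\,\delta_{jk}\mathbf{1}_{2n}$ and $\mathbf{1}_{2n}^{t}H_{jk}=2n\,\delta_{jk}\mathbf{1}_{2n}^{t}$. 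Consequently
\[
Hv_{j}=2n\,v_{j},\qquad v_{j}^{t}H=2n\,v_{j}^{t},
\]
and the same computation for $K$ (using that $K$ is Bush-type) yields $K^{t}v_{j}=2n\,v_{j}$ and $v_{j}^{t}K^{t}=2n\,v_{j}^{t}$.

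The second step is to propagate these through the product. Since $2nL=HK^{t}$, applying to $v_{j}$ and pre-multiplying by $v_{j}^{t}$ gives
\[
Lv_{j}=2n\,v_{j},\qquad v_{j}^{t}L=2n\,v_{j}^{t}.
\]
Writing these block-wise recovers, for every pair of block indices $(a,j)$,
\[
L_{aj}\mathbf{1}_{2n}=2n\,\delta_{aj}\mathbf{1}_{2n},\qquad \mathbf{1}_{2n}^{t}L_{aj}=2n\,\delta_{aj}\mathbf{1}_{2n}^{t}.
\]

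For the third step I would read off the Bush-type conditions on $L$. When $a\neq j$, both the row sums and the column sums of $L_{aj}$ vanish, so $L_{aj}J_{2n}=J_{2n}L_{aj}=0$, which is exactly the off-diagonal requirement. When $a=j$, each row of $L_{jj}$ sums to $2n$; since $L$ is a $(1,-1)$-matrix and $L_{jj}$ has only $2n$ columns, this forces every entry of $L_{jj}$ to equal $+1$, i.e.\ $L_{jj}=J_{2n}$. Combined with the hypothesis that $L$ is $\pm 1$ (so in particular Hadamard, as it arises from an unbiased pair), this establishes that $L$ is Bush-type.

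There is no serious obstacle here; the only point that requires care is the last implication, where the sign constraint on $L$ is what upgrades the row-sum identity $L_{jj}\mathbf{1}_{2n}=2n\mathbf{1}_{2n}$ from an average statement to the pointwise equality $L_{jj}=J_{2n}$. The rest is a mechanical check that the Bush-type annihilation conditions are preserved under the Gramian-style product.
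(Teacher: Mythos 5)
Your proof is correct and is essentially the paper's argument: the paper works with the single matrix $X=I_{2n}\otimes J_{2n}$ (whose block columns are exactly your vectors $v_j=e_j\otimes\mathbf{1}_{2n}$) and computes $LX=\frac{1}{2n}HK^{t}X=2nX$, $XL=2nX$, which is the same propagation of the Bush-type row/column-sum conditions through the product. The only difference is that you spell out the reverse implication (that $L_{jj}\mathbf{1}_{2n}=2n\mathbf{1}_{2n}$ together with the $\pm1$ constraint forces $L_{jj}=J_{2n}$), which the paper leaves implicit in its assertion that $L$ is Bush-type if and only if $LX=XL=2nX$.
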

\begin{proof}
Let $X=I_{2n}\otimes J_{2n}$, then $L$ is of Bush-type if and only if  $LX=XL=2nX$.
We calculate $LX$. 
$$LX=\frac{1}{2n}HK^t X=2nX.$$
Similarly, we have $XL=2nX$. 
Thus $L$ is a Bush-type Hadamard matrix. 
\end{proof}

This enables us to add two more classes and we have
the following.

\begin{theorem}\label{mn5}
Let  $B_1, B_2$ denote the matrices defined above.
Let 
\begin{itemize}
  \item $A_0 = I_{4n^2(m+1)}$
  \item $A_1 = I_{m+1} \otimes I_{2n} \otimes (J_{2n} - I_{2n})$
  \item $A_2 = I_{m+1} \otimes (J_{2n} - I_{2n}) \otimes J_{2n}$
\item $A_3 = (J_{m+1} - I_{m+1} ) \otimes I_{2n} \otimes J_{2n}$
\item $A_4 = B_1 - A_3$ 
\item $A_5 = B_2$
\end{itemize}
Then, $A_0,A_1,A_2,A_3,A_4,A_5$ form a 5-class association scheme.
\end{theorem}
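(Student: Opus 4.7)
The plan is to verify the three defining conditions of a symmetric association scheme: that the $A_i$ are symmetric $(0,1)$-matrices summing to $J$ with pairwise disjoint supports (with $A_0=I$), and that each product $A_iA_j$ lies in the linear span of $A_0,\ldots,A_5$.

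The partition-of-$J$ property reduces to verifying that the support of $A_3$ is contained in the support of $B_1$. I would argue this from Lemma~\ref{lem:bh}: every off-diagonal $(i,j)$-block of $B$ is a Bush-type Hadamard matrix, whose $(r,r)$-sub-block of size $2n$ equals $J_{2n}$. Hence the $+1$ entries of $B$ include precisely the support of $A_3=(J_{m+1}-I_{m+1})\otimes I_{2n}\otimes J_{2n}$, so $A_4=B_1-A_3$ is a legitimate $(0,1)$-matrix and $A_3+A_4+A_5=B_1+B_2$. Together with $A_0+A_1+A_2=I_{m+1}\otimes J_{4n^2}$, the full sum is $J_{4n^2(m+1)}$, and disjointness is immediate from the tensor and sign decompositions.

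For the intersection-number property I would organize the computation in three layers. First, products among $A_0,A_1,A_2,A_3$ and products $A_3A_i$ for $i\in\{1,2\}$ expand purely through the tensor structure using $(J_{2n}-I_{2n})^2=(2n-2)J_{2n}+I_{2n}$. Second, products $A_iA_j$ with $i\in\{3,4,5\}$ and $j\in\{1,2\}$ are computed by a direct count: writing $x=(i,r,c)$, $y=(i',r',c')$, the entry $(A_iA_j)_{xy}$ counts intermediate vertices $z=(i'',r'',c'')$, and the count reduces to enumerating $+1$ or $-1$ entries in a single row or column of a sub-block of the Bush-type Hadamard matrix relating the appropriate fibers. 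The Bush-type hypothesis makes each such count depend only on the class of $(x,y)$: the block-diagonal sub-blocks are $J_{2n}$, and the off-diagonal sub-blocks have all row and column sums equal to zero. A representative output is $A_1A_4=(n-1)A_4+nA_5$.

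The three remaining products $A_4^2$, $A_4A_5$, $A_5^2$ then follow by appealing to Lemma~\ref{3-assoc}: since $B_1=A_3+A_4$ and $B_2=A_5$, we have $A_5^2=B_2^2$ directly, $A_4A_5=B_1B_2-A_3A_5$, and $A_4^2=B_1^2-2A_3A_4-A_3^2$, where the commutation $A_3A_4=A_4A_3$ is a consequence of the second layer already expressing $A_3A_4$ as a combination of the symmetric matrices $A_k$. The main obstacle, and the step where the Bush-type hypothesis is essential, is showing that each direct count in the second layer is genuinely constant on each of the six relations; without the Bush-type structure, the counts would depend on specific entries of the matrices $L^{(i,i')}$ that are invisible to the proposed partition, and multiplicative closure would fail.
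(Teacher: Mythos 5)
Your proposal is correct and follows essentially the same route as the paper: the mixed products are handled by counts that exploit the Bush-type block structure (all-ones diagonal sub-blocks, zero row and column sums in the off-diagonal sub-blocks), and the products among $A_3,A_4,A_5$ are reduced to Lemma~\ref{3-assoc} via $B_1=A_3+A_4$, $B_2=A_5$. The only cosmetic differences are that you make explicit the containment of the support of $A_3$ in that of $B_1$ (so that $A_4$ is a genuine $(0,1)$-matrix), and you extract $A_4^2$ and $A_5^2$ by expanding $B_1^2$ and $B_2^2$ where the paper instead uses the identity $(A_4+A_5)(A_4-A_5)=0$; do note, though, that your ``second layer'' index set must also include $A_3A_4$ and $A_3A_5$ (computable by the same direct count), since your third layer relies on them.
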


\begin{proof}
We work out the intersection numbers, using some of the relations in Lemma \ref{3-assoc}. Note that $A_0+A_1$, $A_3$, ($A_0+A_1+A_2$) are block matrices 
of block size $2n$, ($4n^2$, respectively), where each block
is either the zero or the all one's matrix. On the other hand $A_4$ and $A_5$ are block matrices of block size $2n$, where 
the blocks are either the zero matrix or of row and column sum $n$. So, it is straightforward computation to see the following:
\begin{align}
A_1A_1 &= (2n-1)A_0+(2n-2)A_1.\notag\break\displaybreak[0]\\
A_1A_2 &= (2n-1)A_2.\notag\break\displaybreak[0]\\
A_1A_3 &= (2n-1) A_3.\notag\break\displaybreak[0]\\
A_1A_4 &= (n-1)A_4 + nA_5.\notag\break\displaybreak[0]\\
A_1A_5 &= nA_4 +(n-1)A_5.\notag\break\displaybreak[0]\\
A_2A_2 &=  2n(2n-1)A_0+ 2n(2n-1)A_1 + 2n(2n-2) A_2. \notag\break\displaybreak[0]\\
A_2A_3 &= 2n(A_4+A_5).\notag\break\displaybreak[0]\\
A_2A_4 &=(2n-1)nA_3 + (2n-2)n(A_4+A_5). \notag\break\displaybreak[0]\\
A_2A_5 &=(2n-1)nA_3 + (2n-2)n(A_4+A_5).\notag\break\displaybreak[0]\\
A_3A_3 &= 2mn(A_0+A_1) + 2n(m-1)A_3.\notag\break\displaybreak[0]\\
A_3A_4 &= mnA_2 + (m-1)n(A_4+A_5).\notag\break\displaybreak[0]\\
A_3A_5 &=  mnA_2 + (m-1)n(A_4+A_5).\notag
\end{align}
Using these, the facts that $A_3+A_4=B_1$, $A_5(A_3+A_4)=B_2B_1$, and the intersection numbers in Lemma \ref{3-assoc} we have:
\begin{align}
A_4A_5 &= n^2mA_1 + m(n^2-n) A_2 +(n^2 - \frac{n}{2})(m-1)A_3 \notag\\
&+ (n^2 - \frac{3n}{2})(m-1)A_4 + (n^2 -\frac{n}{2} )(m-1)A_5.\notag
\end{align}
Finally, noting that $A_4-A_5$ is a block matrix of block size $2n$, where 
the blocks are either the zero matrix or of row and column sum zero, it follows that
 $$(A_4+A_5)(A_4-A_5)=0,$$ so we have
 \begin{align}
A_4A_4 &= A_5A_5 =(2n^2 - n)m I  + (n^2-n)m(A_1+A_2) +\notag\\
&(n^2 - \frac{n}{2})(m-1) ( A_3 + A_4) + (n^2 - \frac{3n}{2})(m-1) A_5.\notag
\end{align}
\end{proof}

The existence and a construction method for MUBH  matrices to use mutually suitable Latin squares were given in \cite[Theorem 13]{hko}.
However, in order to obtain Bush-type Hadamard matrices as defined here, an additional assumption on the MSLS is needed as follows.
\begin{proposition}
If there are $m$ mutually suitable Latin squares of size $2n$ with all one entries on diagonal and a Hadamard matrix of order $2n$, then there are $m$ mutually unbiased Bush-type Hadamard matrices of order $4n^2$.
\end{proposition}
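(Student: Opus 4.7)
The plan is to revisit the MSLS construction of \cite[Theorem 13]{hko} and show that the additional diagonal hypothesis on the Latin squares is exactly what forces the resulting mutually unbiased Hadamard matrices to be of Bush-type.

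First, I would recall the building blocks of the [hko] construction. Starting from a Hadamard matrix $K$ of order $2n$, normalized so that its first row and column are all-ones, one manufactures a family of auxiliary $(\pm 1)$-matrices $D_1, D_2, \ldots, D_{2n}$ of order $2n$, indexed by the symbols of the MSLS. By an appropriate choice of labeling one arranges that $D_1 = J_{2n}$ and every other $D_r$ ($r \neq 1$) has zero row and column sums; this second property is what underlies the Hadamardness of the block matrix $H_L := [D_{l_{ij}}]_{i,j=1}^{2n}$ attached to any Latin square $L = (l_{ij})$ on $\{1, \ldots, 2n\}$, since the zero row and column sums make the off-diagonal blocks of $H_L H_L^t$ collapse correctly.

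Next, for each of the $m$ given Latin squares $L_k = (l_{ij}^{(k)})$ I would define $H_k := H_{L_k}$. By \cite[Theorem 13]{hko}, the collection $\{H_1, \ldots, H_m\}$ consists of mutually unbiased Hadamard matrices of order $4n^2$. It remains to check that each $H_k$ is of Bush-type, and this is where the diagonal hypothesis enters. On the diagonal, $l_{ii}^{(k)} = 1$ forces $(H_k)_{ii} = D_1 = J_{2n}$. Since $L_k$ is a Latin square, the symbol $1$ occurs exactly once in each row and column, and the hypothesis pins all these occurrences to the diagonal, so $l_{ij}^{(k)} \neq 1$ whenever $i \neq j$. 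Consequently every off-diagonal block $(H_k)_{ij} = D_{l_{ij}^{(k)}}$ has zero row and column sums, giving $(H_k)_{ij} J_{2n} = J_{2n} (H_k)_{ij} = 0$, which is precisely the off-diagonal Bush-type condition.

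The main technical hurdle lies in the first step: verifying that the \cite{hko} construction can be cast so that one particular $D_r$ is literally $J_{2n}$ while the others have zero row and column sums. This requires a careful normalization of $K$ and a consistent identification of the symbols of the MSLS with the rows (or columns) of $K$, exploiting that every non-trivial row of $K$ is orthogonal to the all-ones vector. Once this normalization is in place, the Bush-type conclusion follows immediately from the Latin square diagonal hypothesis, as sketched above, and no new Hadamard or unbiasedness computation is needed beyond what \cite[Theorem 13]{hko} already provides.
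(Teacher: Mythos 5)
Your proposal is correct and follows essentially the same route as the paper, which simply invokes the construction of \cite[Theorem 13]{hko} and observes that the all-ones diagonal of each Latin square forces the diagonal blocks to be $J_{2n}$. You in fact supply slightly more detail than the paper does, by also noting explicitly that the symbol $1$ occurs off the diagonal nowhere, so every off-diagonal block has zero row and column sums, which is the remaining half of the Bush-type condition.
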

The construction is exactly same as \cite[Theorem 13]{hko}. 
The resulting mutually unbiased Hadamard matrices are all of Bush-type.
Indeed, each Latin square has the entries $1$ on diagonal, thus the resulting Hadamard matrix has the all ones matrices on diagonal blocks. 

The equivalence of MOLS and MSLS was given in \cite[Lemma 9]{hko}. 
The assumption on MOLS corresponding to MSLS with all ones entries on diagonal is that each Latin square has $(1,2,\cdots,n)$ as the first row. 
The MOLS having this property is constructed by the use of finite fields as follows. 
For each $\alpha\in \mathbb{F}_q\setminus\{0\}$, define $L_{\alpha}$ as $(i,j)$-entry equal to $\alpha i+j$, where $i,j\in\mathbb{F}_q$. 
By switching rows so that the first row corresponds to $0\in \mathbb{F}_q$ and mapping $\mathbb{F}_q$ to $\{1,2,\ldots,n\}$ such that each first row becomes $(1,2,\ldots,n)$, we obtain the desired MOLS. 
Thus we have the same conclusion as \cite[Corollary~15]{hko}.
 
\begin{remark}\label{rem:1}
\begin{enumerate}
 \item Rewriting $A_4A_4=A_5A_5$ as:
\begin{align}
A_4A_4 &= A_5A_5= n^2m I +(n^2-n)mJ + n(\frac{m+1}{2}-n)(A_3+A_4)\notag\\
& + n(\frac{3}{2}-\frac{m}{2}-n) A_5.\notag
\end{align}
It is seen that, for $m=2n-1$, $A_5$ is the adjacency matrix of a strongly regular graph and $A_4$
is the adjacency matrix of a Deza Graph, see \cite{dd,efhhh}. This is true for $n=2^k$, for each integer $k\ge 1$. 

\item The association scheme of class $5$ is uniform.  
Any two fibers define a coherent configuration, which is a strongly regular design of the second kind, see \cite{h}.  
The first and second eigenmatrices and $B_5^*$ are as follows:
\begin{align*}
P&=\begin{pmatrix}
1&2n-1&2n(2n-1)&2nm&n(2n-1)m&n(2n-1)m\\
1&-1&0&0&nm&-nm\\
1&2n-1&-2n&2nm&-nm&-nm\\
1&2n-1&-2n&-2n&n&n\\
1&-1&0&0&-n&n\\
1&2n-1&2n(2n-1)&-2n&-n(2n-1)&-n(2n-1)
 \end{pmatrix},\displaybreak[0]\\
Q&=\begin{pmatrix}
1&2n(2n-1)&2n-1&(2n-1)m&2n(2n-1)m&m\\
1&-2n&2n-1&(2n-1)m&-2nm&m\\
1&0&-1&-m&0&m\\
1&0&2n-1&-2n+1&0&-1\\
1&2n&-1&1&-2n&-1\\
1&-2n&-1&1&2n&-1
 \end{pmatrix},\displaybreak[0]\\
B_5^*&=\begin{pmatrix}
0&0&0&0&0&1\\
0&0&0&0&1&0\\
0&0&0&1&0&0\\
0&0&m&m-1&0&0\\
0&m&0&0&m-1&0\\
m&0&0&0&0&m-1
\end{pmatrix}.
\end{align*}
Thus the association scheme certainly satisfies \cite[Proposition 4.7]{dmm}. 
Since the Krein number $q_{1,2}^1=\frac{2n-m-1}{m+1}$ must be positive, 
we obtain $m\leq 2n-1$ holds. 
This means that the number of MUBH matrices of order $4n^2$ is at most $2n-1$. 
The example attaining the upper bound is given in \cite[Corollary 15]{hko}.
\item 
The first, second eigenmatrices and the Krein matrix $B_1^*$ of the class $3$ association scheme are as follows:
\begin{align*}
P&=\begin{pmatrix}
1&n(2n+1)m&n(2n-1)m&4n^2-1\\
1&nm&-nm&-1\\
1&-n&n&-1\\
1&-n(2n+1)&-n(2n-1)&4n^2-1
 \end{pmatrix},\displaybreak[0]\\
Q&=\begin{pmatrix}
1&4n^2-1&(4n^2-1)m&m\\
1&2n-1 &-2n+1&1\\
1&-2n-1&2n+1&1\\
1&-1&-m&m
 \end{pmatrix},\displaybreak[0]\\
B_1^*&=\begin{pmatrix}
0&1&0&0\\
4n^2-1&\frac{2(2n^2-m-1)}{m+1}&\frac{4n^2}{m+1}&0\\
0&\frac{4n^2 m}{m+1}&\frac{(4n^2-2)m-2}{m+1}&4n^2-1\\
0&0&1&0
\end{pmatrix}.
\end{align*}
This association scheme is a $Q$-antipodal $Q$-polynomial scheme of class $3$.  
By \cite[Theorem 5.8]{d}, this scheme comes from a linked systems of symmetric designs.
\end{enumerate}
\end{remark}

Next we show the converse implication as follows.
\begin{theorem}
Assume that there exists an association scheme with the same eigenmatrices in Remark~\ref{rem:1}. 
Then there exists a set of MUBH $\{H_1,\ldots.H_m\}$ of order $4n^2$.  
\end{theorem}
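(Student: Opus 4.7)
The plan is to reverse-engineer the construction preceding Theorem~\ref{mn5}, carrying out four steps: (1) recover the fiber/sub-fiber structure from the eigenvalue data; (2) normalize sub-fiber labels so that $A_3$ attains its explicit tensor form; (3) factor the Gramian $M$ to extract the Hadamard matrices $H_k$; and (4) verify the Bush-type property. For Step~1, summing the first three columns of $P$ shows that $F:=A_0+A_1+A_2$ and $G:=A_0+A_1$ are symmetric $(0,1)$-matrices with $F^2=4n^2F$ and $G^2=2nG$; combined with reflexivity, each is the incidence matrix of an equivalence relation, partitioning the $4n^2(m+1)$ vertices into $m+1$ \emph{fibers} of size $4n^2$ and into $2n(m+1)$ \emph{sub-fibers} of size $2n$, with sub-fibers refining fibers. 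Label vertices as triples $(k,i,j)$ with $k\in\{0,\dots,m\}$ and $i,j\in\{1,\dots,2n\}$ so that $k$ indexes the fiber and $(k,i)$ the sub-fiber; under this labeling, $A_0$, $A_1$, and $A_2$ automatically take the tensor forms of Theorem~\ref{mn5}.

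For Step~2, the intersection numbers, determined by $P$, give $A_0A_3=A_3$ and $A_1A_3=(2n-1)A_3$, hence $(A_0+A_1)A_3=2nA_3$. Entrywise, this forces $A_3$ to be constant on each product of sub-fibers, so $A_3|_{F_k\times F_l}=M_{k,l}\otimes J_{2n}$ for a $(0,1)$-matrix $M_{k,l}$ on the $2n$ sub-fibers; uniformity (apparent from the eigenmatrices, cf.\ Remark~\ref{rem:1}(2)) distributes the valency $k_3=2nm$ evenly, forcing each $M_{k,l}$ to have row and column sums $1$ and hence to be a permutation matrix. Permute the sub-fiber labels of each $F_l$, $l>0$, so that $M_{0,l}=I_{2n}$. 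Comparing off-diagonal fiber blocks of $A_3^2=2mn(A_0+A_1)+2n(m-1)A_3$ gives
\[
\sum_{k'\neq k,l} M_{k,k'}M_{k',l}=(m-1)M_{k,l}.
\]
Because a sum of $m-1$ permutation matrices equals $(m-1)$ times a permutation matrix only when all summands coincide, taking $k'=0$ and using symmetry $M_{l,k}=M_{k,l}^t$ together with $M_{l,0}=I$ force $M_{k,l}=I_{2n}$ for every $k\neq l$, so $A_3=(J_{m+1}-I_{m+1})\otimes I_{2n}\otimes J_{2n}$ exactly.

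For Step~3, set $B=A_3+A_4-A_5$ and $M=I+\tfrac{1}{2n}B$. Reading columns of $P$ shows that $B$ has only two eigenvalues, $2nm$ and $-2n$, so $M$ has eigenvalues $m+1$ (multiplicity $4n^2$) and $0$ and is positive semidefinite of rank $4n^2$. Factor $M=VV^t$ with $V$ of size $4n^2(m+1)\times 4n^2$ partitioned into square blocks $V_0,\dots,V_m$. Since $V_0V_0^t=M_{0,0}=I$, the block $V_0$ is orthogonal; replacing $V$ by $VV_0^t$ we may take $V_0=I$, whence $V_k=M_{k,0}=\tfrac{1}{2n}B_{k,0}$. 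Define $H_k:=B_{k,0}=2nV_k$ for $k=1,\dots,m$; this is a $(\pm 1)$-matrix, and $H_kH_k^t=4n^2M_{k,k}=4n^2I$ shows each $H_k$ is Hadamard, while $H_kH_l^t=4n^2M_{k,l}=2nB_{k,l}$, together with $B_{k,l}B_{k,l}^t=4n^2I$ (via orthogonality of $V_k$ and $V_l$), certifies mutual unbiasedness.

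For Step~4, on the diagonal $2n\times 2n$ sub-blocks of $H_k=B_{k,0}$ only $A_3$ contributes, yielding $J_{2n}$ by Step~2; on off-diagonal sub-blocks only $A_4-A_5$ contributes. The key identity
\[
(A_0+A_1)(A_4-A_5)=0,
\]
a direct consequence of the scheme-determined relations $A_1A_4=(n-1)A_4+nA_5$ and $A_1A_5=nA_4+(n-1)A_5$, says that the signed column sum of $A_4-A_5$ over any sub-fiber vanishes; applied to an off-diagonal sub-block of $H_k$ this is precisely the Bush-type row/column-sum-zero condition (row sums follow by symmetry). Hence each $H_k$ is of Bush-type, completing the proof. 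The main obstacle is Step~2: the normalization of $A_3$ requires combining the constant-on-sub-fibers property with a groupoid-style compatibility across fibers, and without this explicit alignment the diagonal sub-blocks of the recovered $H_k$ need not be $J_{2n}$, so the Bush-type identification in Step~4 could not even begin.
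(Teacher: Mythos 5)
Your proof is correct, and although it follows the same overall skeleton as the paper's argument (recover the fiber and sub-fiber structure, normalize $A_3$ to its tensor form, read the candidate Hadamard matrices off the fiber blocks of $A_0+\tfrac{1}{2n}(A_3+A_4-A_5)$, then verify the Hadamard, unbiased, and Bush-type properties), the central verification is carried out by a genuinely different mechanism. The paper sets $G=(m+1)(E_0+E_1+E_2)$ and, invoking uniformity, restricts to two (resp.\ three) fibers to obtain $\bar{G}^2=2\bar{G}$ (resp.\ $\bar{G}^2=3\bar{G}$), reading the Hadamard and unbiasedness conditions off blockwise, and gets the Bush-type property by computing $\bar{A_3}\bar{G}$ in two ways. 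You instead observe globally that your $M$ (which is exactly the paper's $G$) is positive semidefinite of rank $4n^2$, factor $M=VV^t$, normalize $V_0=I_{4n^2}$, and deduce Hadamardness and unbiasedness from the orthogonality of the square blocks $V_k$; your Bush-type argument via $(A_0+A_1)(A_4-A_5)=0$ is the same identity that the paper's two-way computation of $\bar{A_3}\bar{G}$ encodes. Your route buys a cleaner treatment of unbiasedness, avoiding the somewhat delicate claim that restricting a uniform scheme to a pair or triple of fibers yields a subscheme whose eigenmatrix is obtained by setting $m=1$ or $m=2$; the cost is a longer normalization of $A_3$, where the paper pins the form down more quickly from $A_2A_3=2n(A_4+A_5)$ together with disjointness, while you use a permutation-matrix composition argument. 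Both proofs ultimately lean on uniformity (justified by the Krein matrix $B_5^*$ and \cite[Proposition 4.7]{dmm}, as in Remark~\ref{rem:1}): yours to get row sums $2n$ in each fiber block of $A_3$, the paper's for the subscheme restriction; note that your ``evenly distributed valency'' step could also be obtained without uniformity, since $A_2A_3=2n(A_4+A_5)$ already forces each $M_{k,l}$ to have all column sums equal to $1$.
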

\begin{proof}
Let $A_0,\ldots,A_5$ be the adjacency matrices of an association scheme with the same eigenmatrices in Remark~\ref{rem:1}. 
Let $B_0=A_0$, $B_1=A_3+A_4$, $B_2=A_5$ and $B_3=A_1+A_2$.
By Remark~\ref{rem:1} $B_i$ 's form a linked system of symmetric designs. 
Thus we rearrange the vertices so that $B_3=I_{m+1}\otimes J_{4n^2}-I_{4n^2(m+1)}$ . 
 
We first determine the form of $A_3$. 
Since $A_1$ is the adjacency matrix of an imprimitive strongly regular graph with eigenvalues $2n-1,-1$ with multiplicities $2n(m+1),2n(2n-1)(m+1)$,  
$A_1$ is $I_{2n(m+1)}\otimes (J_{2n}-I_{2n})$ after rearranging the vertices. 
By $B_3=I_{m+1}\otimes J_{4n^2}-I_{4n^2(m+1)}=A_1+A_2$, $A_2$ has the desired form. 
Since $B_3$ and $A_3$ are disjoint and $A_2A_3=2n(A_4+A_5)$, we obtain $A_3=(J_{m+1} - I_{m+1}) \otimes I_{2n} \otimes J_{2n}$. 

Letting $G=(m+1)(E_0+E_1+E_2)$, we have 
\begin{align*}
G&=(m+1)(E_0+E_1+E_2)\\
&=\frac{1}{4n^2}\sum_{i=0}^5(P_{0,i}+P_{1,i}+P_{2,i})A_i\\
&=A_0+\frac{1}{2n}A_3+\frac{1}{2n}A_4-\frac{1}{2n}A_5.
\end{align*}
Since $A_3+A_4+A_5=(J_{m+1}-I_{m+1})\otimes J_{2n} \otimes J_{2n}$, $G$ is the following form
\begin{align*}
G=\begin{pmatrix}
I_{2n}&\frac{1}{2n}H_{1,2}&\ldots&\frac{1}{2n}H_{1,m+1}\\
\frac{1}{2n}H_{2,1}&I_{2n}&\ldots&\frac{1}{2n}H_{2,m+1}\\
\vdots&\vdots&\ddots&\vdots\\
\frac{1}{2n}H_{m+1,1}&\frac{1}{2n}H_{m+1,2}&\ldots&I_{2n}
\end{pmatrix}
\end{align*}
where $H_{i,j}$ ($i\neq j$) is a $(1,-1)$-matrix.

We claim that  $H_k:=H_{k+1,1}$ ($1\leq k\leq m$) are mutually unbiased Bush-type Hadamard matrices. 
Let $\bar{A}$ denote the matrix obtained by restricted to the indices on the first and $(k+1)$-st blocks. 
We consider the principal submatrix $\bar{G}$.  
Since the association scheme is uniform, 
the restricting to indices on the first and second blocks yields an association scheme with the eigenmatrix $\bar{P}=(\bar{P_{ij}})_{i,j=0}^5$ obtained by putting $m=1$. 

Since $\bar{G}=(m+1)(\bar{E_0}+\bar{E_1}+\bar{E_2})$ holds and $\frac{m+1}{2}\bar{E_i}$ ($i=0,1,2$) are primitive idempotents of  the subscheme, we have $\bar{G}^2=2\bar{G}$. 
Expanding  the left hand-side to use the form $\bar{G}=\left( \begin{smallmatrix}I_{2n}&\frac{1}{2n}H_{k}^t\\ \frac{1}{2n}H_{k}&I_{2n} \end{smallmatrix}\right)$, we obtain 
\begin{align*}
\begin{pmatrix}
I_{2n}+\frac{1}{4n^2}H_{k}^tH_{k}&\frac{1}{n}H_{k}^t\\
\frac{1}{n}H_{k}& I_{2n}+\frac{1}{4n^2}H_{k}H_{k}^t
\end{pmatrix}=
\begin{pmatrix}
2I_{2n}&\frac{1}{n}H_{k}^t\\ \frac{1}{n}H_{k}&2I_{2n}
\end{pmatrix}. 
\end{align*}
This implies that $H_{k}$ is a Hadamard matrix of order $4n^2$.

Next we show $H_k$ is of Bush-type.
Now we calculate $\bar{A_3}\bar{G}$ in two ways. First we have
\begin{align*}
\bar{A_3}\bar{G}&=(m+1)\bar{A_3}(\bar{E_0}+\bar{E_1}+\bar{E_2})\\ 
&=(m+1)(\sum_{i=0}^5\bar{P_{i3}}\bar{E_i})(\bar{E_0}+\bar{E_1}+\bar{E_2})\\ 
&=(m+1)(\sum_{i=0}^2\bar{P_{i3}}\bar{E_i})\\
&=2n(m+1)(\bar{E_0}+\bar{E_2})\\ 
&=2n(m+1)(\frac{1}{4n^2(m+1)}\sum_{i=0}^5(\bar{Q_{i,0}}+\bar{Q_{i,2}})\bar{A_i})\\ 
&=(A_0+A_1+A_3)\\
&=\begin{pmatrix}
I_{2n} \otimes J_{2n}&I_{2n} \otimes J_{2n}\\
I_{2n} \otimes J_{2n}&I_{2n} \otimes J_{2n}
\end{pmatrix}.
\end{align*}
Second we have 
\begin{align*}
\bar{A_3}\bar{G}&=\begin{pmatrix}
0&I_{2n} \otimes J_{2n}\\
I_{2n} \otimes J_{2n}&0
\end{pmatrix}\begin{pmatrix}
I_{2n}&\frac{1}{2n}H_{k}^t\\
\frac{1}{2n}H_{k}&I_{2n}
\end{pmatrix}\\
&=\begin{pmatrix}
\frac{1}{2n}(I_{2n} \otimes J_{2n})H_{k}&I_{2n} \otimes J_{2n}\\
I_{2n} \otimes J_{2n}& \frac{1}{2n}(I_{2n} \otimes J_{2n})H_{k}^t
\end{pmatrix}.
\end{align*}
Comparing these two equations yields 
\begin{align*}
(I_{2n} \otimes J_{2n})H_{k}=(I_{2n} \otimes J_{2n})H_{k}^t=2nI_{2n} \otimes J_{2n}.
\end{align*}
This implies that $H_{k}$ is of Bush-type by Lemma~\ref{lem:bh}.
 
Finally we show $H_1,\ldots,H_m$ are unbiased. 
Let $k,k'$ be integers such that $1\leq k<k'\leq m$.
From now on, the overbar of matrices means the matrix obtained by restricted to the indices on the first, $(k+1)$-th, and $(k'+1)$-th blocks. 
We then have $\bar{G}^2=3\bar{G}$. 
Comparing the $(2,3)$-block, we obtain  
\begin{align*}
\frac{1}{4n^2}H_k H_{k'}^t+I_{2n}H_{k+1,k'+1}+H_{k+1,k'+1}I_{2n}=3H_{k+1,k'+1},
\end{align*}  
namely $\frac{1}{4n^2}H_k H_{k'}^t=H_{k+1,k'+1,}$. 
Since $H_{k+1,k'+1}$ is a $(-1,1)$-matrix,  $H_k$ and $H_{k'}$ are unbiased. 
\end{proof}

\section{$8$ class association schemes}
Linked systems of symmetric designs with specific parameters have the extended $Q$-bipartite double which yields an association scheme of mutually unbiased bases \cite[Theorem 3.6]{mmw}. 
Next we show an association scheme from our association schemes of class $5$ has a double cover and show a relation to an association scheme of class $4$ as a fusion scheme. 
\begin{theorem}
Let $A_0,A_1,\ldots,A_5$ be the adjacency matrices of the association scheme in Theorem~\ref{mn5}. 
Define \begin{align*}
\tilde{A}_0&=\begin{pmatrix} A_0&0\\0&A_0\end{pmatrix},
\tilde{A}_1=\begin{pmatrix} A_1&0\\0&A_1\end{pmatrix},
\tilde{A}_2=\begin{pmatrix} 0&A_1\\A_1&0\end{pmatrix},
\tilde{A}_3=\begin{pmatrix} A_2&A_2\\A_2&A_2\end{pmatrix},\\
\tilde{A}_4&=\begin{pmatrix} A_3&0\\0&A_3\end{pmatrix},
\tilde{A}_5=\begin{pmatrix} 0&A_3\\A_3&0\end{pmatrix},
\tilde{A}_6=\begin{pmatrix} A_4&A_5\\A_5&A_4\end{pmatrix},
\tilde{A}_7=\begin{pmatrix} A_5&A_4\\A_4&A_5\end{pmatrix},\\
\tilde{A}_8&=\begin{pmatrix} 0&A_0\\A_0&0\end{pmatrix}.
\end{align*}  
Then $\tilde{A}_0,\ldots,\tilde{A}_8$ form an association scheme.
\end{theorem}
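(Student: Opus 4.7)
Let $N = 4n^2(m+1)$, so each $\tilde{A}_i$ is a $2N\times 2N$ matrix. The three defining properties of a symmetric association scheme—symmetric $(0,1)$-matrices, $\tilde{A}_0 = I_{2N}$, and $\sum_{i=0}^{8}\tilde{A}_i = J_{2N}$—are immediate from the block definitions together with the corresponding properties of $A_0,\ldots,A_5$. The substantive work is the closure of each product $\tilde{A}_i\tilde{A}_j$ in the linear span of $\tilde{A}_0,\ldots,\tilde{A}_8$, which I would establish by block computation.

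To streamline the bookkeeping, I would rewrite the nine matrices as tensors using $S=\bigl(\begin{smallmatrix}0&1\\1&0\end{smallmatrix}\bigr)$: $\tilde{A}_0 = I_2\otimes A_0$, $\tilde{A}_1 = I_2\otimes A_1$, $\tilde{A}_2 = S\otimes A_1$, $\tilde{A}_3 = (I_2+S)\otimes A_2$, $\tilde{A}_4 = I_2\otimes A_3$, $\tilde{A}_5 = S\otimes A_3$, $\tilde{A}_6 = I_2\otimes A_4 + S\otimes A_5$, $\tilde{A}_7 = I_2\otimes A_5 + S\otimes A_4$, $\tilde{A}_8 = S\otimes A_0$, with $S^2 = I_2$. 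Any product $\tilde{A}_i\tilde{A}_j$ then expands into a sum of tensors of the form $I_2\otimes(A_aA_b)$ and $S\otimes(A_cA_d)$, each of which I would replace by its expansion in the $A_k$'s using the intersection numbers from Theorem~\ref{mn5}. The products that avoid $\tilde{A}_3,\tilde{A}_6,\tilde{A}_7$ (for instance $\tilde{A}_1\tilde{A}_2 = S\otimes A_1^2 = (2n-1)\tilde{A}_8 + (2n-2)\tilde{A}_2$) fall out term by term.

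\textbf{Main obstacle.} The subtle products are those that involve $\tilde{A}_3$, $\tilde{A}_6$, or $\tilde{A}_7$, because the $I_2$- and $S$-halves of single $A_k$ summands are not individually in the span: the span forces the $A_2$ contributions to come paired as $(I_2+S)\otimes A_2$, and the $A_4,A_5$ contributions to come in the crossed form defining $\tilde{A}_6,\tilde{A}_7$. For example, $\tilde{A}_6\tilde{A}_7 = 2\,I_2\otimes A_4A_5 + S\otimes(A_4^2+A_5^2)$, and the identity $A_4^2 = A_5^2$ from Theorem~\ref{mn5} converts this into $2\,I_2\otimes A_4A_5 + 2\,S\otimes A_4^2$. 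Closure then requires the coincidences that (i) the $A_2$-coefficients of $A_4A_5$ and of $A_4^2$ agree, so that the $I_2\otimes A_2$ and $S\otimes A_2$ pieces combine into a multiple of $\tilde{A}_3$, and (ii) the $A_4$-coefficient of $A_4^2$ equals the $A_5$-coefficient of $A_4A_5$ (and symmetrically with $4,5$ swapped), so that the remaining pieces coalesce into $\tilde{A}_6$ and $\tilde{A}_7$. Both coincidences can be read off from the explicit intersection numbers in Theorem~\ref{mn5}, and the same pattern handles $\tilde{A}_3^2$, $\tilde{A}_3\tilde{A}_6$, $\tilde{A}_3\tilde{A}_7$, $\tilde{A}_6^2$, and $\tilde{A}_7^2$, completing the verification.
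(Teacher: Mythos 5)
Your proposal is correct and takes essentially the same route as the paper: the paper's entire proof is the one line ``Follows from the calculation in Theorem~\ref{mn5},'' i.e.\ closure is checked against the intersection numbers already computed there, which is exactly what you do (and the coefficient coincidences you isolate --- equal $A_2$-coefficients in $A_4A_5$ and $A_4^2$, and the matching $A_4$/$A_5$-coefficients --- do hold in those formulas). Your tensor bookkeeping with $S$ is just a cleaner way of organizing the same verification.
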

\begin{proof}
Follows from the calculation in Theorem~\ref{mn5}.  
\end{proof}

\begin{remark}
\begin{enumerate}
\item The association scheme of class $8$ is also uniform.  
The second eigenmatrix and $B_8^*$ are as follows:
\begin{align*}
Q&=\left(\begin{smallmatrix}
1&2n(2n-1)&2n&2n-1&2n(2n-1)(m+1)&(2n-1)m&2nm&2n(2n-1)m&m\\
1&-2n&2n&2n-1&-2n(m+1)&(2n-1)m&2nm&-2nm&m\\
1&2n&-2n&2n-1&-2n(m+1)&(2n-1)m&-2nm&2nm&m\\
1&0&0&-1&0&-m&0&0&m\\
1&0&2n&2n-1&0&-2n+1&-2n&0&-1\\
1&0&-2n&2n-1&0&-2n+1&2n&0&-1\\
1&2n&0&-1&0&1&0&-2n&-1\\
1&-2n&0&-1&0&1&0&2n&-1\\
1&-2n(2n-1)&-2n&2n-1&2n(2n-1)(m+1)&(2n-1)m&-2nm&-2n(2n-1)m&m
\end{smallmatrix}\right),\displaybreak[0]\\
B_7^*&=\begin{pmatrix}
0&0&0&0&0&0&0&0&1\\
0&0&0&0&0&0&0&1&0\\
0&0&0&0&0&0&1&0&0\\
0&0&0&0&0&1&0&0&0\\
0&0&0&0&m&0&0&0&0\\
0&0&0&m&0&m-1&0&0&0\\
0&0&m&0&0&0&m-1&0&0\\
0&m&0&0&0&0&0&m-1&0\\
m&0&0&0&0&0&0&0&m-1
\end{pmatrix}.
\end{align*}
Thus the association scheme certainly satisfies \cite[Proposition 4.7]{dmm}. 
\item 
Letting $\tilde{B}_1=\tilde{A}_1+\tilde{A}_2+\tilde{A}_3$, $\tilde{B}_2=\tilde{A}_4+\tilde{A}_6$ and  $\tilde{B}_3=\tilde{A}_5+\tilde{A}_7$, $\tilde{B}_4=\tilde{A}_8$, 
we obtain a fusion association scheme of class $4$.  
The second eigenmatrix of the class $4$ fusion association scheme is as follows:
\begin{align*}
Q&=\begin{pmatrix}
1&4n^2&(4n^2-1)(m+1)&4n^2m&m\\
1&0&-m-1&0&m\\
1&2n&-0&-2n&-1\\
1&-2n&0&2n&-1\\
1&-4n^2&(4n^2-1)(m+1)&-4n^2m&m
 \end{pmatrix}.
\end{align*}
This association scheme is a $Q$-antipodal and $Q$-bipartite $Q$-polynomial scheme of class $4$, see \cite{lmo}.  
\end{enumerate}
\end{remark}

\end{document}